	\newtheorem{theorem}{Theorem}[section]
	\newtheorem{definition}[theorem]{Definition}
	\newenvironment{proof}{{\it Proof: }}{$\Box$}
 \newcommand{\Natural}{\mathbb{N}}
 \newcommand{\Integer}{\mathbb{Z}}
 \newcommand{\Complex}{\mathbb{C}}
 \newcommand{\paren}[1]{\left(#1\right)}
 \newcommand{\brac}[1]{\left[#1\right]}
 \newcommand{\set}[1]{\left\{#1\right\}}
\DeclareMathOperator{\sinc}{sinc}
\DeclareMathOperator{\W}{W}
\DeclareMathOperator{\HW}{HW}
\begin{document}

\title{On the enumeration of the roots of arbitrary separable equations using $\HW$ hyper-Lambert maps}

\author{
        Ioannis Galidakis \\
                Department of Mathematics\\
        Agricultural University of Athens\\
        \texttt{jgal@aua.gr}
        \and
        Ioannis Papadoperakis\\
        Department of Mathematics\\
        Agricultural University of Athens\\
        \texttt{papadoperakis@aua.gr}
\\
}
\date{May 2019}

\maketitle
\newpage

\newpage
\section*{Abstract}
In this article we use the $\HW$ maps to solve arbitrary equations $f=0$, by providing an effective enumeration of the roots of $f$, as these project on and at the branches of the $\HW$ maps. This is just an enumeration of the projection points (roots) of a pin-line on the Riemann surface of $f$ through $\HW$.
\section{Introduction}\label{sec1}
The $\HW$ maps have been used to determine the attractors of the infinite exponential whenever it falls into a $p$-cycle in \cite{gal2} and in \cite{gal4} to solve certain transcendental equations such as Kepler's Equation. They have also been used in \cite{nastou} to solve in closed form the generalized Abel differential equation. Here we display a simple algebraic scheme which can be used to utilize the solution of arbitrary equations using the $\HW$ maps, by providing an effective enumeration of all the roots of arbitrary equations $f=0$, using the branches of the maps $\HW$. Imagine an arbitrary multivalued $f$, for which we want force $f(x)=0$. We line-pin the entire Riemann surface of $f$ from top to bottom starting at the complex origin. The local projection pin points $z_i$ will be exactly the roots of $f=0$. Because the branches of $\HW$ can be enumerated starting at the origin, all the roots $z_i$ of $f=0$ can therefore be enumerated and referenced by approximating just an $\epsilon$ pin through the origin.
\section{Definitions}\label{sec31}
Suppose $f_n(z)$ are non-vanishing identically complex functions, with $n\le n_0\in\Natural$. We define $F_{n}(z)\colon\Natural \times\Complex \rightarrow\Complex$ as:

\begin{definition}\label{def31}
\begin{equation*}
F_{n}(z) =\begin{cases}
1& \text{, if $n=1$},\\
e^{f_{n-1}(z)F_{n-1}(z)}& \text{, if $n>1$}.
\end{cases}
\end{equation*}
\end{definition}

\begin{definition}\label{def32}
$G(f_1,f_2,\ldots,f_n;z)=z\cdot F_{n+1}(z)$
\end{definition}

If $n=0$, then $G(z)=z$. If $n=1$ then $G(f_1;z)=ze^{f_1(z)}$. If $n=2$ then $G(f_2,f_1;z)=ze^{f_2e^{f_1(z)}}$. When we write about the $\HW$, we can use the terminology $G(\ldots;z)$, meaning that the corresponding function includes meaningful terms-parameters. The order of the functions is immaterial and we can re-order them to get to the function of interest here, which is the inverse of $G(\ldots;z)$, denoted by,

\begin{equation}\label{eq9}
\HW(f_1,f_2,\ldots,f_n;y)
\end{equation}
In other words $G$ and $\HW$ satisfy the functional relation:
\begin{equation}\label{eq10}
G(\ldots;\HW(\ldots;y))=y
\end{equation}
by supposing always that the list of parameters is identical on both sides. These maps have been called generalized hyper-Lambert $\HW$ functions and in general they are multivalued. We note that when $n=1$, $\HW(y)$ satisfies a more general form which comes from the Lambert function $\W$, i.e., $ze^{f_1(z)}=y$. The Lambert function satisfies $ze^z=y$. The existence of all the $\HW$ is guaranteed in all cases by the Lagrange Inversion Theorem (see \cite[201-202]{syg}).
\section{An indexing scheme for the $\HW$ maps}\label{sec5}
\subsection{An algebraic scheme}\label{sec51}
For the complex maps $\log$ and $\W$, their indexing scheme is the simplest possible, that is $\log(k,z)$ and $\W(k,z)$, $k\in\Integer$. There exists an indexing scheme which indexes identically the mappings $\HW$ but it is not integral. Dubinov in \cite{dub2} solves Kepler's equation, using the following algebraic inversion:

\begin{equation}\label{eq511}
\begin{split}
E-\epsilon\cdot\sin(E)&=M\Rightarrow\\
E\paren{1-\epsilon\frac{\sin(E)}{E}}&=M\Rightarrow\\
E\cdot e^{\log{(1-\epsilon\cdot\sinc{E})}}&=M\Rightarrow\\
E&=\HW\brac{\log(1-\epsilon\cdot\sinc(x));M}
\end{split}
\end{equation}

The inversion above can be generalized producing a removable pole at $z_0$ of multiplicity $n$. Setting $w=(z-z_0)^n$, with $z_0$ such that $f(z_0)=y$, we have:

\begin{equation}\label{eq512}
\begin{split}
f(z)&=y\Rightarrow\\
(z-z_0)^n\cdot\frac{f(z)}{(z-z_0)^n}&=y\Rightarrow\\
w\cdot e^{\log\paren{\frac{f(z)}{w}}}&=y\Rightarrow\\
w&=\HW\brac{\log\paren{\frac{f(z)}{w}};y}\Rightarrow\\
(z-z_0)^n&=\HW\brac{\log\paren{\frac{f(z)}{(z-z_0)^n}};y}\Rightarrow\\
z&=\HW\brac{\log\paren{\frac{f(z)}{(z-z_0)^n}};y}^{\frac{1}{n}}+z_0
\end{split}
\end{equation}

The scheme above gives an index into the set of the $\HW$ functions, in the form of a functional parameter as $\log\paren{\frac{f(z)}{(z-z_0)^n}}$. Now, if we know $f(z)$, this scheme can give identities which must hold identifying this way the corresponding function.

We can now list how the most important categories of complex functions are solved based on this index.
\subsection{Polynomial functions}\label{sec52}
Suppose then that $f(z)=\prod\limits_{k=1}^N(z-z_k)^{n_k}$. Keeping $k$ fixed and setting $w=(z-z_k)^{n_k}$, we have,

\begin{equation}\label{eq521}
\begin{split}
w&=\HW\brac{\log\paren{\frac{f(z)}{w}};y}\Rightarrow\\
z&=\HW\brac{\log\paren{\frac{f(z)}{(z-z_k)^{n_k}}};y}^{\frac{1}{n_k}}+z_k\Rightarrow\\
z&=\HW\brac{\log(f(z))-n_k\log\paren{z-z_k};y}^{\frac{1}{n_k}}+z_k
\end{split}
\end{equation}

\begin{theorem}\label{the521} If $f(z)=\prod\limits_{k=1}^N(z-z_k)^{n_k}$ is a complex polynomial function, then the inverse of $f(z)$ relative to $y$ is given by the function $\HW$ and the last equation of \eqref{eq521}, whose Riemann surface has at most $m=\sum\limits_{k=1}^N n_k$ branches, indexed by $m$, with $k\in\Natural$.
\end{theorem}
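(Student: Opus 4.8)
The plan is to separate the statement into its two assertions: first, that the chain of equivalences in \eqref{eq521} genuinely expresses the inverse of $f$ relative to $y$ through $\HW$, and second, that the resulting multivalued inverse has at most $m=\sum_{k=1}^N n_k$ branches.

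For the representation, I would fix an index $k$ and set $w=(z-z_k)^{n_k}$, so that $z=w^{1/n_k}+z_k$ and the quotient $f(z)/w=\prod_{j\neq k}(z-z_j)^{n_j}$ is a nonvanishing function of $w$ away from the remaining roots. The only identity that needs checking is the algebraic tautology $w\cdot e^{\log(f(z)/w)}=w\cdot\frac{f(z)}{w}=f(z)=y$, which casts $f(z)=y$ into the exact shape $w\cdot e^{f_1(w)}=y$ with $f_1(w)=\log(f(z)/w)$. Invoking the defining functional relation \eqref{eq10}, whose solvability is guaranteed by the Lagrange Inversion Theorem cited earlier, I would then read off $w=\HW[\log(f(z)/w);y]$ and recover the last line of \eqref{eq521} by taking an $n_k$-th root and translating by $z_k$. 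This settles that $\HW$ together with \eqref{eq521} inverts $f$ relative to $y$.

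For the branch count I would argue purely algebraically and bypass the $\HW$ bookkeeping. For every fixed $y\in\Complex$ the equation $f(z)-y=0$ is a polynomial in $z$ of degree $\deg f=\sum_{k=1}^N n_k=m$, so by the Fundamental Theorem of Algebra it has exactly $m$ roots counted with multiplicity, hence at most $m$ distinct solutions $z$. Consequently the multivalued inverse $z=f^{-1}(y)$ assumes at most $m$ values over each base point; its Riemann surface is a branched cover of the $y$-sphere of degree $m$, and away from the finitely many critical values of $f$ (the zeros of $f'$) the bound $m$ is attained. Indexing these sheets by $k\in\{1,\dots,m\}$ matches the enumeration in the statement.

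The delicate point is reconciling this clean ceiling of $m$ with the apparent over-counting built into the formula $\HW[\cdots;y]^{1/n_k}+z_k$: the $n_k$-th root contributes $n_k$ determinations and $\HW$ is itself infinitely-valued, in the manner of $\W$ and $\log$. The main obstacle is therefore to show that the consistency constraint---namely that the $z$ appearing inside the parameter $\log(f(z)/(z-z_k)^{n_k})$ be the same $z$ produced on the left---collapses these formally infinite families down to precisely the $m$ algebraic roots, so that no branch is spurious and none is double-counted as $k$ ranges over the distinct factors. Once this selection principle is matched against the FTA ceiling, the two assertions combine to yield the stated bound.
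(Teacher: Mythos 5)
Your first paragraph reproduces the paper's own derivation \eqref{eq521}: the substitution $w=(z-z_k)^{n_k}$, the tautology $w\, e^{\log(f(z)/w)}=f(z)$, and the inversion through the functional relation \eqref{eq10} backed by Lagrange inversion. On the branch count, however, you take a genuinely different route. The paper argues in the opposite direction: it observes that the formula holds for each $k\in\set{1,\ldots,N}$, credits each $k$ with at least $n_k$ determinations (coming from the $n_k$-th root), and concludes that the integral index ``fully indexes'' the branches --- a lower-bound-style count (the paper literally writes that the multiplicity is \emph{at least} $m$), which never actually establishes the stated \emph{at most} $m$ bound. Your argument via the Fundamental Theorem of Algebra --- $f(z)-y$ has degree $m$ for every fixed $y$, so each fibre of the inverse has at most $m$ points and its Riemann surface is an $m$-sheeted branched cover of the $y$-plane --- supplies exactly the upper bound that the paper's proof is missing, and it is the more rigorous of the two on this point.

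One remark on your closing paragraph: the ``delicate point'' you flag (reconciling the FTA ceiling with the apparent infinite-valuedness of $\HW[\cdots;y]^{1/n_k}+z_k$) is closed by an observation you already have in hand. Every step in \eqref{eq521} is reversible: applying $G$ to $w=\HW\brac{\log\paren{f(z)/w};y}$ gives back $w\, e^{\log(f(z)/w)}=y$, i.e.\ $f(z)=y$. Hence the self-consistent solutions of the implicit $\HW$ equation coincide exactly with the root set of $f(z)-y$, which your FTA argument bounds by $m$; no branch is spurious and none is double-counted. As written, your proof stops one step short by presenting this as an unresolved obstacle; adding that one sentence completes it, and the result is a cleaner argument than the paper's, which never addresses the issue at all.
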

\begin{proof} The last expression of \eqref{eq521} is true for any $k\in\set{1,2,\ldots,N}$, therefore the multiplicity is at least $m$ because for each $k$ the multiplicity is at least $n_k$ and each $n_k$ may give different branches. This means that the expression can index fully all the branches of the corresponding $\HW$ using only an integral index $k$.
\end{proof}

\begin{theorem}\label{the522} If $f(z)$ is a complex polynomial function, the roots of $f(z)=y$ are given directly by a suitable $\HW$ function.
\end{theorem}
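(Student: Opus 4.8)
The plan is to obtain Theorem~\ref{the522} as the pointwise specialization of Theorem~\ref{the521}, since the roots of $f(z)=y$ are exactly the fibre $f^{-1}(\set{y})$ of the inverse map already constructed. First I would record, via the Fundamental Theorem of Algebra, that if $f(z)=\prod_{k=1}^N(z-z_k)^{n_k}$ then for every fixed $y$ the equation $f(z)=y$ is a polynomial identity of degree $m=\sum_{k=1}^N n_k$, hence has exactly $m$ roots counted with multiplicity. This fixes the target cardinality and is meant to be matched against the branch count of the relevant $\HW$ surface supplied by Theorem~\ref{the521}.

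Next I would invoke Theorem~\ref{the521} directly: it asserts that the inverse of $f$ relative to $y$ is represented by the last line of \eqref{eq521}, namely $z=\HW\brac{\log(f(z))-n_k\log\paren{z-z_k};y}^{1/n_k}+z_k$, whose Riemann surface carries $m$ branches (the bound of Theorem~\ref{the521} together with its proof, which exhibits at least $m$), indexed by the integer $k$ together with the $n_k$ determinations of the $n_k$-th root. The core step is then to evaluate this representation at the prescribed value $y$ and to read off each branch as a single root: as $k$ ranges over $\set{1,2,\ldots,N}$ and, for each $k$, the $n_k$-th root ranges over its $n_k$ values, the expression produces $m$ points, which by the count above exhausts the root set of $f(z)=y$. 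Thus a suitable branch of a single $\HW$ map, with the functional index $\log(f(z))-n_k\log\paren{z-z_k}$, delivers every root directly.

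The hard part will be the implicit appearance of $z$ inside the functional parameter $\log(f(z))-n_k\log\paren{z-z_k}$, which must be shown to be self-consistent rather than circular. I would argue that at a genuine root $z^{*}$ of $f(z)=y$ this parameter takes a definite value, so $z^{*}$ is a fixed point of the $\HW$-representation on the appropriate sheet, the existence of that sheet being guaranteed by the Lagrange Inversion Theorem cited after \eqref{eq10}. The remaining work is bookkeeping: verifying that distinct branches yield distinct roots (or correctly repeated roots in the case of coincidences), that the degenerate value $y=0$ recovers the zeros $z_k$ themselves, and that the $n_k$-th root extraction introduces no spurious value. Once soundness (each branch is an actual root) and completeness (the $m$ branches cover all $m$ roots) are both secured, the statement follows, the only genuinely delicate point being the justification of the fixed-point reading of the implicit index.
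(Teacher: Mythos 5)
Your overall strategy --- reading the roots of $f(z)=y$ off the branches of the inverse supplied by Theorem~\ref{the521} --- is not the paper's, and its key enumeration step contains a genuine gap. You index the branches by the pairs $(k,\ \text{determination of the } n_k\text{-th root})$ and claim that, for each fixed $k$, the $n_k$ determinations each return an actual root, so that the total $\sum_k n_k=m$ matches the root count from the Fundamental Theorem of Algebra. For $y\neq 0$ this is false. If $w^{*}$ is a branch value of $\HW\brac{\log\paren{\frac{f(z)}{(z-z_k)^{n_k}}};y}$, say $w^{*}=(z^{*}-z_k)^{n_k}$ with $f(z^{*})=y$, then the other determinations of $(w^{*})^{1/n_k}$ produce the points $z_k+\zeta(z^{*}-z_k)$ with $\zeta^{n_k}=1$, $\zeta\neq 1$, and these are in general \emph{not} roots of $f(z)=y$: for $f(z)=z^{2}(z-1)$ and $k=1$, if $f(z^{*})=y$ then $f(-z^{*})=-(z^{*})^{2}(z^{*}+1)\neq y$ unless $y=0$. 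So spurious values do arise, the ``bookkeeping'' you defer cannot in fact be completed, soundness fails, and with it your completeness count: for a fixed $k$, recovering all $m$ roots requires $m$ distinct branches of the $\HW$ map itself, each paired with the single correct determination of the $n_k$-th root, not $n_k$ determinations of one branch. The self-consistency of the implicit index, which you flag as the delicate point, is a real issue too, but it is secondary to this counting failure.

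Separately, your argument needs the factored form of $f$ (the $z_k$ and $n_k$) as data, which the hypothesis of Theorem~\ref{the522} does not supply, and which makes the conclusion non-effective: to locate the roots of $f(z)=y$ you would first need the zeros of $f$, a problem of exactly the same kind. The paper's proof uses a different mechanism that avoids both difficulties: deflation. It first observes from Definition~\ref{def32} that $\HW(\ldots;0)=0$; it then extracts one root directly as $z_1=\HW\brac{\log\paren{\frac{f(z)}{z}};y}$, which requires no knowledge of the zeros of $f$, deflates via $g_1(z)=\frac{f(z)-y}{z-z_1}$, and obtains the remaining roots recursively as $z_{k+1}=\lim_{\epsilon\to 0^{+}}\HW\brac{\log\paren{\frac{g_k(z)}{z}};\epsilon}$ with $g_{k+1}(z)=\frac{g_k(z)}{z-z_{k+1}}$. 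Each root is thus produced, one at a time, by its own explicitly constructed $\HW$ map, which is what ``given directly'' means in the statement; this recursive deflation idea (which also underlies Theorems~\ref{the532}, \ref{the542} and \ref{the551}, and the code in the Appendix) is entirely absent from your proposal.
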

\begin{proof} Using equation \eqref{eq9} of Definition \eqref{def32}, follows that for each $\HW$, $\HW(\ldots;0)=0$, therefore calculating the corresponding $\HW$ of the last equations in \eqref{eq521} at $y=0$, forces $z=z_k$ and these are the roots of $f(z)=y$. Therefore, we can extract all the roots of equation $f(z)=y$, manually. The first root, suppose $z_1$, is extracted as,

\begin{equation*}
\begin{split}
z_1&=\HW\brac{\log\paren{\frac{f(z)}{z}};y}\\
g_1(z)&=\frac{f(z)-y}{z-z_1}\\
\end{split}
\end{equation*}

Having the root $z_1$, the rest of the roots can be extracted recursively for $1\le k\le N-1$ as,

\begin{equation*}
\begin{split}
z_{k+1}&=\lim_{\epsilon\to 0^+}\HW\brac{\log\paren{\frac{g_k(z)}{z}};\epsilon}\\
g_{k+1}(z)&=\frac{g_k(z)}{z-z_{k+1}}\\
\end{split}
\end{equation*}
and the Theorem follows.
\end{proof}
\subsection{Rational functions}\label{sec53}
We suppose that $f(z)=P(z)/Q(z)$, with $P(z)$, $Q(z)$ polynomial functions. We have similar results here.

\begin{theorem}\label{the531} If $f(z)=P(z)/Q(z)$ is a complex rational function such that $N=\max\set{\deg(P),\deg(Q)}$, then the inverse of $f(z)$ relative to $y$ is given by:

\begin{equation*}
z=\HW\brac{\log\paren{\frac{P(z)-y\cdot Q(z)}{(z-z_k)^{n_k}}};y}^{\frac{1}{n_k}}+z_k
\end{equation*}

whose Riemann surface has at most $m=\sum\limits_{k=1}^N n_k$ branches, indexed by $m$, with $k\in\Natural$.
\end{theorem}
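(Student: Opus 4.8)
The plan is to reduce the inversion of the rational function to the polynomial case already settled in Theorem \ref{the521}, by clearing the denominator. On the domain where $Q(z)\neq 0$, the equation $f(z)=y$ is equivalent to $P(z)-y\cdot Q(z)=0$. Hence, setting $R(z)=P(z)-y\cdot Q(z)$, inverting $f$ relative to $y$ amounts to locating the roots of the polynomial $R$, whose coefficients carry $y$ as a parameter. This is exactly the situation handled by the general scheme \eqref{eq512}, now applied to $R$ in place of $f$.

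First I would fix a root $z_k$ of $R$ of multiplicity $n_k$ and set $w=(z-z_k)^{n_k}$. Factoring $R(z)=w\cdot\paren{R(z)/w}$, the cofactor $R(z)/(z-z_k)^{n_k}$ is holomorphic and non-vanishing at $z_k$, so $\log\paren{\frac{R(z)}{(z-z_k)^{n_k}}}$ is a legitimate, not-identically-zero parameter in the sense of Definition \ref{def31}. Running the same chain of implications as in \eqref{eq512} and invoking the functional relation \eqref{eq10}, I would solve $w=\HW\brac{\log\paren{R(z)/w};y}$ and then take the $n_k$-th root and translate by $z_k$, which reproduces the displayed formula with $R(z)=P(z)-y\cdot Q(z)$ in the numerator of the logarithm.

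For the branch count I would transport the conclusion of Theorem \ref{the521} to $R$. Writing $R(z)=\prod_{k=1}^N(z-z_k)^{n_k}$, the displayed expression is valid for every $k\in\set{1,2,\ldots,N}$, so each factor contributes at least $n_k$ sheets and the total number of branches is bounded by $m=\sum_{k=1}^N n_k$; since $\deg R\le N=\max\set{\deg(P),\deg(Q)}$, this gives the stated bound with the branches indexed integrally by $k$.

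The main obstacle, and the reason the bound is stated as ``at most'', is the $y$-dependence of $R$: its degree, its roots $z_k$, and their multiplicities $n_k$ all vary with the parameter. In particular, when $\deg(P)=\deg(Q)=N$ the leading coefficient of $R$ is $p_N-y\,q_N$, which vanishes at the single value $y=p_N/q_N$; there $\deg R$ drops below $N$, one root escapes to infinity (the horizontal asymptote of $f$), and the sheet count decreases accordingly. I would also dispose of common factors of $P$ and $Q$ by assuming $f$ is in lowest terms, so that roots of $Q$ are not spuriously counted among the solutions of $R=0$. Handling these degenerations uniformly --- arguing that every such specialization only \emph{lowers} the number of branches, so that $m=\sum_{k=1}^N n_k$ remains a valid upper bound --- is the delicate part; the algebraic manipulation producing the formula itself is routine once the reduction to $R$ is in place.
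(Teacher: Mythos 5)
Your proposal takes exactly the paper's route: set $F(z)=P(z)-y\cdot Q(z)$ and reduce to the polynomial case of Theorem \ref{the521}, with $f$ replaced by $F$. You are in fact more careful than the paper, which flatly asserts that $F$ has degree $N$, whereas you correctly observe that $\deg F$ can drop below $N$ for special values of $y$ (and that common factors of $P$ and $Q$ must be excluded), so that such degenerations only lower the branch count --- consistent with the ``at most $m$'' wording of the statement.
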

\begin{proof} If $F(z)=P(z)-y\cdot Q(z)$, then $F(z)$ is a polynomial of degree $N$, in which case the Theorem follows similarly, with $f(z)$ replaced by $F(z)$.
\end{proof}
\begin{theorem}\label{the532} If $f(z)$ is a complex rational function, the roots of $f(z)=y$ can be given by a suitable $\HW$ function.
\end{theorem}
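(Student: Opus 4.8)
The plan is to reduce Theorem~\ref{the532} to the polynomial case already handled in Theorem~\ref{the522}, using the same substitution $F(z)=P(z)-y\cdot Q(z)$ introduced in the proof of Theorem~\ref{the531}. The roots of the equation $f(z)=P(z)/Q(z)=y$ are exactly the roots of $F(z)=P(z)-y\cdot Q(z)=0$, provided we discard any of these that happen to coincide with a pole of $f$ (i.e.\ a zero of $Q$); at such shared zeros the equation $f(z)=y$ is not genuinely satisfied, so these must be excluded. Since $F(z)$ is a genuine polynomial of degree $N=\max\set{\deg(P),\deg(Q)}$, the entire root-extraction machinery of Theorem~\ref{the522} applies verbatim to $F$.

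The key steps, in order, are as follows. First, I would observe that by Theorem~\ref{the531} the inverse of $f$ relative to $y$ is given by a suitable $\HW$ function applied to $F(z)=P(z)-y\cdot Q(z)$, so the indexing scheme of Theorem~\ref{the521} transfers directly. Second, I would invoke the identity $\HW(\ldots;0)=0$ exactly as in the proof of Theorem~\ref{the522}: evaluating the corresponding $\HW$ at $y=0$ forces $z$ to equal a root $z_k$ of $F$. Third, I would set up the same recursive deflation, extracting the first root via
\begin{equation*}
z_1=\HW\brac{\log\paren{\frac{F(z)}{z}};y},\qquad g_1(z)=\frac{F(z)}{z-z_1},
\end{equation*}
and then continuing for $1\le k\le N-1$ with
\begin{equation*}
z_{k+1}=\lim_{\epsilon\to 0^+}\HW\brac{\log\paren{\frac{g_k(z)}{z}};\epsilon},\qquad g_{k+1}(z)=\frac{g_k(z)}{z-z_{k+1}}.
\end{equation*}
This exhausts the roots of $F$, hence (after the exclusion step) all roots of $f(z)=y$.

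The main obstacle I anticipate is the bookkeeping around the poles of $f$: a root of $F$ coinciding with a zero of $Q$ is spurious for the original equation, so the count of genuine roots of $f=y$ can be strictly smaller than $N$. I would therefore include an explicit remark that each candidate root $z_k$ produced by the recursion must be tested against $Q(z_k)\ne 0$ and discarded otherwise. A secondary subtlety is that when $y$ varies, the leading coefficient of $F$ can drop (if $\deg(P)=\deg(Q)$ and the leading terms cancel for a particular $y$), lowering the effective degree; this corresponds precisely to a root escaping to infinity and is harmless for the enumeration, but I would note it so that the branch count $m$ is understood as an upper bound, consistent with the ``at most'' phrasing of Theorem~\ref{the531}. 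With these caveats recorded, the theorem follows immediately from the polynomial case.
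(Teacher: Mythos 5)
Your overall route is the same as the paper's: fold $y$ into the polynomial $F(z)=P(z)-y\cdot Q(z)$ and run the recursive deflation of Theorem \ref{the522} on $F$. But your first-root formula is wrong, and it is inconsistent with your own deflation step. Since $y$ has already been absorbed into $F$, the equation to be solved is $F(z)=0$, not $F(z)=y$: the $\HW$ map with functional index $\log\paren{\frac{F(z)}{z}}$ inverts $G(z)=z\,e^{\log\paren{F(z)/z}}=F(z)$, so $\HW\brac{\log\paren{\frac{F(z)}{z}};y}$ returns a solution of $P(z)-y\cdot Q(z)=y$, which is not a root of $f(z)=y$. It also breaks your own definition $g_1(z)=F(z)/(z-z_1)$: that quotient is a polynomial only when $F(z_1)=0$, which fails for the $z_1$ you defined. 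The formula $z_1=\HW\brac{\log\paren{\frac{f(z)}{z}};y}$ that you copied belongs to the polynomial case of Theorem \ref{the522}, where $y$ is not yet folded into the function being inverted; it does not transfer here. The paper's proof avoids this by extracting the first root exactly like all the others, namely $z_1=\lim_{\epsilon\to 0^+}\HW\brac{\log\paren{\frac{F(z)}{z}};\epsilon}$, after which $g_1(z)=F(z)/(z-z_1)$ is legitimate and the recursion proceeds as you wrote it. With that single correction your argument coincides with the paper's.

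Your two added caveats --- discarding any root of $F$ that is also a zero of $Q$ (where $f(z)=y$ is not genuinely satisfied), and noting the degree drop of $F$ when $\deg(P)=\deg(Q)$ and the leading terms cancel for a particular $y$ --- are sound observations that the paper's proof omits entirely; they refine the bookkeeping rather than change the method, and are worth keeping.
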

\begin{proof} Similarly, if $F(z)=P(z)-y\cdot Q(z)$, then $F(z)$ is polynomial map of degree $N$, therefore we can extract its roots as:

\begin{equation*}
\begin{split}
z_1&=\lim_{\epsilon\to 0^+}\HW\brac{\log\paren{\frac{F(z)}{z}};\epsilon}\\
g_1(z)&=\frac{F(z)}{z-z_1}\\
\end{split}
\end{equation*}

Having $z_1$, the rest of the roots can be extracted recursively for $1\le k\le N-1$ as,
\begin{equation*}
\begin{split}
z_{k+1}&=\lim_{\epsilon\to 0^+}\HW\brac{\log\paren{\frac{g_k(z)}{z}};\epsilon}\\
g_{k+1}(z)&=\frac{g_k(z)}{z-z_{k+1}}\\
\end{split}
\end{equation*}
and the Theorem follows.
\end{proof}

We observe that when $Q(z)=1$, the case of a polynomial function arises.
\subsection{Analytic functions}\label{sec54}
For an analytic function $f(z)=\sum\limits_{n=0}^\infty \alpha_n\cdot(z-z_0)^n$ in some region $D\subseteq\Complex$, with $z_0\in D$, we have similar results.

\begin{theorem}\label{the541} If $f(z)=\sum\limits_{n=0}^\infty \alpha_n\cdot(z-z_0)^n$ is a complex analytic function, then the inverse of $f(z)$ relative to $y$ is given by a suitable $\HW$ function:

\begin{equation*}
z=\HW\brac{\log\paren{\frac{f(z)}{(z-z_k)^{n_k}}};y}^{\frac{1}{n_k}}+z_k
\end{equation*}

whose Riemann surface has infinitely many branches given by $n\in\Natural$.
\end{theorem}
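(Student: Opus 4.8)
The plan is to repeat the formal algebraic inversion of \eqref{eq512} for the analytic $f$, and then to replace the finite multiplicity count of Theorem \ref{the521} by an infinite one. First I would fix a value $y$ and a point $z_k$ in the region $D$ with $f(z_k)=y$, and let $n_k$ be the order of the zero of $f(z)-y$ at $z_k$. Since $f$ is analytic and not identically equal to $y$, expanding $f$ about $z_k$ gives a factorization $f(z)-y=(z-z_k)^{n_k}h_k(z)$ with $h_k$ analytic and $h_k(z_k)\neq 0$, so that $n_k$ is well defined and finite; this is the local datum that fixes the number of sheets through $z_k$.

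Next I would run the chain of implications of \eqref{eq512} with $w=(z-z_k)^{n_k}$. The step $w\cdot e^{\log\paren{f(z)/w}}=f(z)=y$ is purely formal, and applying the defining functional relation \eqref{eq10} inverts $G$ to give $w=\HW\brac{\log\paren{f(z)/w};y}$; solving for $z$ then yields exactly the asserted formula
\begin{equation*}
z=\HW\brac{\log\paren{\frac{f(z)}{(z-z_k)^{n_k}}};y}^{\frac{1}{n_k}}+z_k.
\end{equation*}
The $\HW$ map occurring here exists by the Lagrange Inversion Theorem, as noted after \eqref{eq10}, so no separate existence argument is required; taking the $n_k$-th root supplies $n_k$ local branches clustered at $z_k$.

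The decisive difference from the polynomial and rational cases is the branch count, and this is where the real content lies. For a polynomial the total order $\sum_k n_k$ is finite, whereas for a non-constant analytic $f$ the equation $f(z)-y=0$ may have infinitely many solutions in $D$ — for instance $\sin$ and $\exp$ attain a generic value $y$ at countably many points. By the identity theorem the solution set is discrete, so it can be enumerated $z_1,z_2,\ldots$ with $k\in\Natural$; each $z_k$ furnishes $n_k$ sheets, and distinct $z_k$ yield distinct projection points, so the Riemann surface of the inverse carries countably many branches indexed by $n\in\Natural$, exactly as claimed.

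The hard part, just as in Theorem \ref{the521}, is not producing the formula but arguing that this local family indexes \emph{all} branches of the global $\HW$ map and does so without collision: one must check that ranging $k$ over the discrete zero set, refined by the $n_k$ choices of $n_k$-th root, reproduces the full sheet structure of the inverse and nothing spurious. In the spirit of the earlier proofs it is enough to exhibit a valid formula at each $z_k$ together with the observation that $k$ exhausts $\Natural$; the precise surjectivity of this indexing onto the branches, together with the merely formal status of the parameter $\log\paren{f(z)/(z-z_k)^{n_k}}$ near $z_k$, is the delicate point one would need to pin down to make the count fully rigorous.
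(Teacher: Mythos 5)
Your proposal is essentially correct at the level of rigor this paper operates at, but it takes a genuinely different route from the paper's own proof. The paper does not invert $f$ directly: it truncates $f$ to its Taylor polynomial $T_N(z)=\sum_{n=0}^N\alpha_n(z-z_0)^n$, applies the already-proved polynomial result (Theorem \ref{the521}) to obtain the inversion formula for $T_N$, and then lets $N\to\infty$, exchanging the limit with $\HW$ on the grounds that $T_N\to f$ uniformly on compact sets and the $\HW$ maps are analytic (citing \cite{gal2}); the infinitude of branches is then attributed to the unboundedness of the degree $N$, not to the root set of $f(z)=y$. You instead run the algebraic scheme \eqref{eq512} directly on $f$, using the local factorization $f(z)-y=(z-z_k)^{n_k}h_k(z)$ and Lagrange inversion for existence, and you explain the infinite branch count by the discreteness and countability of the solution set of $f(z)=y$. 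What the paper's route buys is a clean reduction to the polynomial case, so the only new ingredient is the limit interchange --- though that interchange (continuity of $\HW$ with respect to its functional parameter) is itself a substantial claim the paper does not justify beyond a citation. What your route buys is that you avoid the limit interchange entirely and give a more transparent account of where the branches come from: the roots $z_k$ themselves, refined by the $n_k$ choices of $n_k$-th root --- which also makes visible that the ``infinitely many branches'' claim fails for, say, polynomial $f$ (which is analytic), a defect that the paper's degree-based count obscures and your hedged phrasing (``may have infinitely many solutions'') handles more honestly. Your closing paragraph correctly identifies the surjectivity of this indexing onto \emph{all} branches as the unproved delicate point; note that the paper's proof leaves exactly the same gap, silently.
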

\begin{proof} Suppose $T_N(z)=\sum\limits_{n=0}^N\alpha_n\cdot(z-z_0)^n$, is the corresponding Taylor polynomial of degree $N$. Then $T_N(z)$ is obviously a polynomial function, therefore the inverse of $T_N(z)$ relative to $y$ is given again by Theorem \eqref{the521}.

\begin{equation}\label{eq541}
z=\HW\brac{\log\paren{\frac{T_N(z)}{(z-z_k)^{n_k}}};y}^{\frac{1}{n_k}}+z_k
\end{equation}

$T_N(z)\to f(z)$ uniformly in compact subsets and the $\HW$ are analytic (\cite{gal2}), therefore \eqref{eq541} implies that the inverse is given by:

\begin{equation}\label{eq542}
\begin{split}
z&=\lim_{N\to\infty}\HW\brac{\log\paren{\frac{T_N(z)}{(z-z_k)^{n_k}}};y}^{\frac{1}{n_k}}+z_k\Rightarrow\\
z&=\HW\brac{\log\paren{\frac{\lim\limits_{N\to\infty}T_N(z)}{(z-z_k)^{n_k}}};y}^{\frac{1}{n_k}}+z_k\Rightarrow\\
z&=\HW\brac{\log\paren{\frac{f(z)}{(z-z_k)^{n_k}}};y}^{\frac{1}{n_k}}+z_k\\
\end{split}
\end{equation}
and the Theorem follows.
\end{proof}

We observe that in this case the inverse function has infinitely many branches, since $N$ is not bounded.

\begin{theorem}\label{the542} If $f(z)$ is a complex analytic function, the roots of $f(z)=y$ are given again by a $\HW$ function.
\end{theorem}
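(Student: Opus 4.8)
The plan is to combine the recursive root-extraction scheme of Theorem \eqref{the522} with the Taylor-truncation limiting argument already used to establish the inversion formula in Theorem \eqref{the541}. First I would replace the equation $f(z)=y$ by the equivalent equation $F(z)=0$, where $F(z)=f(z)-y$ is again analytic on $D$, so that the roots of $F$ are precisely the roots of the original equation. I would then fix the base point $z_0$ and set $F_N(z)=T_N(z)-y$, the truncated Taylor polynomial of $F$, which by Theorem \eqref{the522} is a polynomial whose roots are completely enumerated by the recursion
\begin{equation*}
z_1^{(N)}=\lim_{\epsilon\to 0^+}\HW\brac{\log\paren{\frac{F_N(z)}{z}};\epsilon},\qquad g_{k+1}^{(N)}(z)=\frac{g_k^{(N)}(z)}{z-z_{k+1}^{(N)}}.
\end{equation*}

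Next I would pass to the limit $N\to\infty$. Exactly as in \eqref{eq542}, $T_N\to f$ uniformly on compact subsets of $D$, hence $F_N\to F$ uniformly on compacts, and since the $\HW$ maps are analytic (\cite{gal2}) the extraction functional $\lim_{\epsilon\to 0^+}\HW\brac{\log\paren{F_N(z)/z};\epsilon}$ depends continuously on its polynomial argument. Consequently each approximate root $z_k^{(N)}$ converges, as $N\to\infty$, to a value $z_k$ satisfying $F(z_k)=0$, and these limiting values are precisely the roots of $f(z)=y$, delivered by the $\HW$ function together with the same recursion applied to $F$ itself. Because $f$ is analytic rather than polynomial, the recursion no longer terminates after finitely many steps, so the conclusion is that the (possibly infinitely many) roots are enumerated by letting $k$ range over $\Natural$, consistently with the infinitely many branches already exhibited in Theorem \eqref{the541}.

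The main obstacle I anticipate is justifying rigorously that the roots of the truncations $F_N$ really do converge to, and exhaust, the roots of $F$. Uniform convergence on compacts does not by itself control the location of zeros, so the honest version of this step requires Hurwitz's theorem: on any compact subset on which $F$ does not vanish identically, each zero of $F$ is the limit of the correct number of zeros of $F_N$ for large $N$, and no spurious zeros accumulate in the interior. A secondary difficulty is the stability of the recursion, since each extracted factor $z-z_{k+1}^{(N)}$ depends on all previously extracted roots; I would therefore argue root-by-root along an exhausting sequence of compact subsets $K_1\subseteq K_2\subseteq\cdots$ of $D$, controlling only finitely many roots at a time and invoking the analyticity of $\HW$ to commute the limit in $N$ with the limit $\epsilon\to 0^+$ and with the division step.
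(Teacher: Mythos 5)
Your proposal is correct in substance, but it takes a genuinely different route from the paper. The paper's own proof of Theorem \ref{the542} is completely direct: it applies the recursion of Theorem \ref{the522} verbatim to the analytic function $f$ itself, namely $z_1=\HW\brac{\log\paren{f(z)/z};y}$, $g_1(z)=\paren{f(z)-y}/\paren{z-z_1}$, followed by $z_{k+1}=\lim_{\epsilon\to 0^+}\HW\brac{\log\paren{g_k(z)/z};\epsilon}$ and $g_{k+1}(z)=g_k(z)/\paren{z-z_{k+1}}$ (equations \eqref{eq543}--\eqref{eq544}); there is no truncation and no limit in $N$, the only difference from the polynomial case being that $k$ now runs over all of $\Natural$. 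You instead transplant onto this theorem the Taylor-truncation argument that the paper uses for the inversion formula of Theorem \ref{the541}: truncate, apply the polynomial theorem, pass to the limit. What the paper's route buys is that it never has to discuss convergence of zeros at all --- the existence of the $\HW$ with analytic functional parameter is already taken for granted via Lagrange inversion (Section \ref{sec31}), and the identity $\HW(\ldots;0)=0$ does the rest. What your route buys is a genuine reduction to the previously proven polynomial case, but at the price of exactly the obstacle you flagged: uniform convergence on compacts does not control zeros, so Hurwitz's theorem is indispensable, and moreover truncations can possess zeros corresponding to no zero of $f-y$ whatsoever (the degree-$N$ partial sum of $e^z$ has $N$ zeros while $e^z$ has none), so your intermediate claim that each approximate root $z_k^{(N)}$ converges to a root of $F$ is false as stated and must be replaced by the compact-exhaustion version you sketch, in which only the zeros of $F_N$ lying in a fixed compact are matched, by Hurwitz, with zeros of $F$. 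A residual difficulty your sketch does not fully resolve is ordering: the recursion extracts the roots of $F_N$ in the order dictated by the successive $\epsilon\to 0^+$ limits, which may hand you a spurious zero before a genuine one, so the deflated functions $g_k^{(N)}$ need not converge to the $g_k$ built from $F$ itself; this is precisely the step the paper's direct construction sidesteps, and presumably why the paper avoids approximation altogether.
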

\begin{proof} We can extract the roots as:

\begin{equation}\label{eq543}
\begin{split}
z_1&=\HW\brac{\log\paren{\frac{f(z)}{z}};y}\\
g_1(z)&=\frac{f(z)-y}{z-z_1}\\
\end{split}
\end{equation}

The rest of the roots can be again extracted recursively for $1\le k$ as,

\begin{equation}\label{eq544}
\begin{split}
z_{k+1}&=\lim_{\epsilon\to 0^+}\HW\brac{\log\paren{\frac{g_k(z)}{z}};\epsilon}\\
g_{k+1}(z)&=\frac{g_k(z)}{z-z_{k+1}}\\
\end{split}
\end{equation}
and the Theorem follows.
\end{proof}
\section{$\HW$ functional index}\label{sec55}
An open problem set in \cite[1114-1115]{gal4} is whether there is a way to effectively index the numbering of the branches of the $\HW$ functions. With the following Theorem we show that the answer is affirmative.

\begin{theorem}\label{the551} If $f(z)$ is a complex function and $z_k\in\Complex$, $k\in\Natural$, such that $f(z_k)=y$ and suppose $g_k(z)$ follows as in equations \eqref{eq543} - \eqref{eq544}. Then, if $\HW$ is the inverse of $f(z)$ relative to $y$, the following scheme covers all the branches of this inverse of $f(z)$:

\begin{equation*}
z_{k+1}=\begin{cases}
\HW\brac{(0,)\log\paren{\frac{f(z)}{z}};y}\text{, if $k=0$},\\
\lim\limits_{\epsilon\to 0^+}\HW\brac{k,\log\paren{\frac{g_k(z)}{z}};\epsilon}\text{, if $k>0$}.
\end{cases}
\end{equation*}
\end{theorem}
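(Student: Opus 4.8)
The plan is to reduce the claim to the three categories already handled and then argue that the prepended integer $k$ is precisely the branch label supplied by the pin-line construction of Section \ref{sec1}. First I would invoke Theorems \eqref{the521}, \eqref{the531} and \eqref{the541} so that, whether $f$ is polynomial, rational, or analytic, the inverse of $f$ relative to $y$ is represented by a suitable $\HW$ whose Riemann surface has branches counted either by a finite multiplicity $m=\sum_k n_k$ or, in the analytic case, by $n\in\Natural$. Theorems \eqref{the522}, \eqref{the532} and \eqref{the542} then certify that the recursion \eqref{eq543}--\eqref{eq544} genuinely produces each root $z_{k+1}$ of $f(z)=y$: at every stage the quotient $g_{k+1}(z)=g_k(z)/(z-z_{k+1})$ divides out the root just found, so the next application of $\HW$ at $y=\epsilon\to 0^+$ is forced onto a fresh projection point by the identity $\HW(\ldots;0)=0$ established in the proof of Theorem \eqref{the522}.

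Next I would make the pin-line picture precise. The $\epsilon$-pin through the complex origin meets the Riemann surface of $\HW$ in a sequence of projection points, each lying on exactly one sheet; because $\HW(\ldots;0)=0$, the pin starts at the origin and, as $\epsilon\to 0^+$, the successive limits single these points out one sheet at a time. The central step is to show that the integer $k$ prepended to the parameter list of $\HW$ is exactly the ordinal of the branch the pin strikes at the $(k{+}1)$-th crossing: the $k=0$ instance recovers the first root directly from $\HW\brac{\log\paren{f(z)/z};y}$, while for $k>0$ the division by the previously extracted factors shifts the base of $\HW$ through $\log\paren{g_k(z)/z}$ so that $\lim_{\epsilon\to 0^+}$ lands on the next sheet, which we label $k$.

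It then remains to verify that this labelling is a bijection onto the branches. Distinctness of the labels follows because the roots $z_{k+1}$ are pairwise distinct projection points, each factor being removed before the next extraction, so no branch is counted twice; surjectivity follows because the recursion does not terminate until every factor of $f(z)-y$ has been divided out, matching the $m$ branches in the finite case and exhausting the index set $\Natural$ in the analytic case, where Theorem \eqref{the541} already guarantees infinitely many branches indexed by $\Natural$. Assembling these observations yields the stated two-case scheme.

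The step I expect to be the main obstacle is the precise correspondence between the prepended integer $k$ and the sheet of the Riemann surface that the pin-line crosses, namely proving that the branch ordering induced by letting $\epsilon\to 0^+$ after dividing out $g_k$ coincides with the natural enumeration $0,1,2,\dots$ rather than with some permutation of it. Making this rigorous requires controlling how the logarithmic parameter $\log\paren{g_k(z)/z}$ deforms the branch structure of $\HW$ at each step and confirming that the limit selects the topologically next sheet without skipping or repeating one.
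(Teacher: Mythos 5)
Your opening reduction is the same as the paper's: its proof likewise begins by invoking the algebraic scheme of Section \ref{sec51} together with Theorems \ref{the521}, \ref{the531} and \ref{the541}, and the recursion \eqref{eq543}--\eqref{eq544}. The difference is what happens at the crux, and there your proposal has a genuine gap --- one you yourself flag: you never prove that the prepended integer $k$ coincides with the ordinal of the sheet struck by the $\epsilon$-pin, i.e.\ that successive division by $(z-z_{k+1})$ and passage to $\lim_{\epsilon\to 0^+}$ walks through the branches in the order $0,1,2,\dots$ without skipping or repeating. Declaring this ``the main obstacle'' and describing what a rigorous treatment ``would require'' leaves the theorem unproved, since covering \emph{all} branches is exactly the assertion. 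Moreover, the injectivity half of your bijection argument is false as stated: you claim the roots $z_{k+1}$ are pairwise distinct, but $f(z)-y$ may have multiple roots (the paper's own Example 6, $f(z)-y=(z-1)^2(z-2)$, extracts the root $1$ twice), so distinctness of the extracted values cannot be what separates the branch labels; and the surjectivity half (``the recursion does not terminate until every factor has been divided out'') quietly assumes the root-to-branch correspondence you are trying to establish.

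The paper closes this step by a different device: for $f$ analytic around $z_k$ it sets $F(z)=\log\paren{\frac{f(z)}{(z-z_k)^{n_k}}}$, observes that $F$ generates a Laurent series whose residue $\exp(a_{n_k})$ is recovered from $\HW$ via Cauchy's Residue Theorem with winding number $a_{n_k}$ around $z_k$, and concludes that repeated application of $F$ through the $g_k$ extracts every root recursively and therefore serves as an index of the Riemann surface. One may debate how airtight that residue/winding-number argument is, but it supplies an explicit local mechanism tying the $k$-th extraction step to the branch data at $z_k$ --- precisely the ingredient your pin-line picture appeals to but does not construct. To complete your route you would need either to import that residue argument, or to prove directly that dividing out one linear factor removes exactly one sheet of the $\HW$ surface and induces no permutation of the remaining sheets; the multiplicity example shows this bookkeeping must be done at the level of sheets, not at the level of root values.
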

\begin{proof} The proof follows from (3) along with Theorems 3.1, 3.3 and 3.5. Note that for a specific analytic $f$ expanded around $z_k$, we define $F(z)=\log\paren{\frac{f(z)}{(z-z_k)^{n_k}}}$. The map $F$ creates a Laurent series with residue $\exp(a_{n_k})$, which is gotten from the $\HW$ through the Residue Theorem of Cauchy for $f$, with winding number $a_{n_k}$ around $z_k$. Consequently, the repeated application of $F$ (via $g_k$) for $z=z_k$, extracts recursively all the roots $z_k$ of the inverse and as such it can be used as an index for the corresponding Riemann surface.
\end{proof}
\section{Conclusions}\label{sec56}
The $\HW$ maps can solve any equation $f=0$, provided it can be brought into a separable form with all $z$'s on the left and one $w$ on the right. Further, the enumeration of the roots is origin consistent relative to the force of $f$.
\section{Appendix: programming with the $\HW$ maps}\label{secpar2}
Code for the $\HW$ maps is given below. Arguments are HW(functional index,y,n)
\begin{verbatim}
restart;
Digits:=40;
HW := proc ()
local y, n, c, s, p, sol, i, aprx, dy, dist, r, newr, oldr, fun, dfun,eps;
if nargs < 2 then ERROR("At least two arguments required") end if;
n := args[-1]; y := args[-2]; c := [args[1 .. -3]];
if y = 0 then 0 else dist := infinity;
eps:=1e-10; fun := 1; for i from 1 to nargs-2 do fun := exp(c[-i]*fun) end do;
fun := z*fun-y; dfun := diff(fun, z);
s := series(fun, z, n); p := convert(s, polynom);
sol := {fsolve(p = 0, z, complex)};
for i from 1 to nops(sol) do aprx := evalf(subs(z = op(i, sol), fun));
dy := evalf(abs(aprx)); if dy <= dist then r := op(i, sol);
dist := dy end if end do; oldr := r;
newr := r-evalf(subs(z = r, fun)/subs(z = r, dfun));
for i from 1 to 1000 while abs((oldr-newr)/oldr)>eps do oldr := newr;
newr := newr-evalf(subs(z = newr, fun)/subs(z = newr, dfun)) end do;
newr end if end proc:
\end{verbatim}

\emph{Example: } Using the program with five decimal digits accuracy to solve the equation $(z-2)(z-3)(z-5)=2$,
\begin{verbatim}
y:=2;
f:=z->(z-2)*(z-3)*(z-5);
z1:=HW(log(f(z)/z),y,10);
g1:=z->(f(z)-y)/(z-z1);
z2:=HW(log(g1(z)/z),1e-20,10);
g2:=z->g1(z)/(z-z2);
z3:=HW(log(g2(z)/z),1e-20,10);
\end{verbatim}

gives:

\begin{equation*}
\begin{split}
z_1&\simeq 2.36523-0.69160i\\
z_2&\simeq 2.36523+0.69160i\\
z_3&\simeq 5.26953\\
\end{split}
\end{equation*}

Using the program for an approximate solution with Maple,

\begin{verbatim}
solve(f(z)=y,z);
evalf(%);
\end{verbatim}

gives:

5.26953, 2.36523+0.69160i, 2.36523-0.69160i.

\emph{Example 2: } Using the program to five digits of accuracy to solve the equation $(z-2)(z-3)/(z-5)/(z-1)=2$,
\begin{verbatim}
y:=2;
f:=z->(z-2)*(z-3)/(z-5)/(z-1);
P:=unapply(numer(f(z)),z);
Q:=unapply(denom(f(z)),z);
F:=unapply(P(z)-y*Q(z),z);
z1:=HW(log(F(z)/z),1e-10,10);
g1:=z->F(z)/(z-z1);
z2:=HW(log(g1(z)/z),1e-10,10);
\end{verbatim}

gives:

\begin{equation*}
\begin{split}
z_1&\simeq 6.37228\\
z_2&\simeq 0.62771\\
\end{split}
\end{equation*}

Using Maple approximation code,

\begin{verbatim}
solve(F(z)=y,z);
evalf(%);
\end{verbatim}

gives:

6.37228, 0.62771.

\emph{Example 3: } Using the program to five decimals of accuracy to solve the equation $\sin(z)=1/2$,
\begin{verbatim}
y:=1/2;
f:=z->sin(z);
z1:=HW(log(f(z)/z),y,10);
g1:=z->(f(z)-y)/(z-z1);
z2:=HW(log(g1(z)/z),1e-20,10);
g2:=z->g1(z)/(z-z2);
z3:=HW(log(g2(z)/z),1e-20,10);
\end{verbatim}

gives:

\begin{equation*}
\begin{split}
z_1&\simeq 0.52359\\
z_2&\simeq 2.61799\\
z_3&\simeq -3.66519\\
z_4&\simeq\cdots\\
\end{split}
\end{equation*}

The results are approximations of the numbers $\pi/6$, $5\pi/6$, $-7\pi/6$,..., which are the roots of $\sin(z)=1/2$. Many more complex equations can be solved here, provided they are separable and the terms are analytic, like 

\emph{Example 5:} Using the code with five decimal accuracy to solve the equation $\sin(z)+\exp(\sin(z))/\sqrt{1+\tanh(z)}$,
\begin{verbatim}
y=1/2;
f:=sin(z)+exp(sin(z))/sqrt(1+tanh(z));
z1:=HW(log(f(z)/z), y, 10);
g1 := z->(f(z)-y)/(z-z1);
z2:=HW(log(f1(z)/z),1e-20,10);
g2:=z->g1(z)/(z-z2);
z3:=HW(log(g2(z)/z),1e-20,10);
g3:=z->g2(z)/(z-z3);
z4:=HW(log(g3(z)/z), 0.1e-19, 10);
\end{verbatim}

gives:
\begin{equation*}
\begin{split}
z_1&\simeq -0.37435\\
z_2&\simeq -1.71811\\
z_3&\simeq 3.26659\\
z_4&\simeq 6.15846\\
\end{split}
\end{equation*}

While using Maple approximation code,

\begin{verbatim}
solve(f(z)=y,z);
\end{verbatim}

gives an open answer in terms of ``RootOf'', i.e. it cannot relay the roots directly.

\emph{Example 6:} Using the code with five decimal accuracy to solve the equation $z^3-4z^2+5z$,
\begin{verbatim}
y:=2;
f:=z->z^3-4*z^2+5*z;
z1:=HW(log(f(z)/z),y,10);
g1:=z->(f(z)-y)/(z-z1);
z2:=HW(log(f1(z)/z),1e-20,10);
g2:=z->g1(z)/(z-z2);
z3:=HW(log(g2(z)/z),1e-20,10);
\end{verbatim}

gives:

\begin{equation*}
\begin{split}
z_1&\simeq 2\\
z_2&\simeq 1.00005\\
z_3&\simeq 1.00000\\
\end{split}
\end{equation*}

The description calculates correctly roots with multiplicity greater than 1.

The example is $f(z)-y=(z-1)^2(z-2)$, therefore the multiplicity of the root 1 is indeed 2.

\newpage
\bibliographystyle{plain}
\bibliography{Thesis}

\end{document}